\newtheorem{theorem}{Theorem}
\newtheorem{lemma}{Lemma}
\newtheorem{definition}{Definition}
\newtheorem{example}{Example}
\newtheorem{remark}{Remark}
\newenvironment{myenv}[1]
  {\mdfsetup{
    frametitle={\colorbox{white}{\space#1\space}},
    innertopmargin=0pt,
    frametitleaboveskip=-\ht\strutbox,
    frametitlealignment=\center
    }
  \begin{mdframed}%
  }
  {\end{mdframed}}
\definecolor{mydarkblue}{rgb}{0,0.08,0.45}
\definecolor{colormomentum}{HTML}{1b9e77}
\definecolor{colorstepsize}{HTML}{d95f02}
\definecolor{color3}{HTML}{7570b3}
\def\xx{{\boldsymbol x}}
\def\bb{{\boldsymbol b}}
\def\AA{{\boldsymbol A}}
\def\HH{{\boldsymbol H}}
\def\dif{\mathop{}\!\mathrm{d}}
\def\MP{\dif\mu_{\mathrm{MP}}}
\def\RR{{\mathbb R}}
\def\EE{{\mathbb{E}\,}}
\def\defas{\stackrel{\text{def}}{=}}
\newcommand*\mybluebox[1]{\colorbox{myblue}{\hspace{1em}#1\hspace{1em}}}
\definecolor{myblue}{HTML}{D2E4FC}
\definecolor{Gray}{gray}{0.92}
\icmltitlerunning{Universal Average-Case Optimality of Polyak Momentum}
\begin{document}

\twocolumn[

    
    %
    \icmltitle{Universal Average-Case Optimality of Polyak Momentum}

    \icmlsetsymbol{equal}{*}
    
     \begin{icmlauthorlist}
        \icmlauthor{Damien Scieur}{equal,sail}
        \icmlauthor{Fabian Pedregosa}{equal,google}
    \end{icmlauthorlist}

    \icmlaffiliation{sail}{Samsung SAIT AI Lab, Montreal}
    \icmlaffiliation{google}{Google Research}

    \icmlcorrespondingauthor{Damien Scieur}{damien.scieur@gmail.com}

    \icmlkeywords{optimization, momentum, Polyak, acceleration, asymptotic}

    \vskip 0.3in
]
\printAffiliationsAndNotice{\icmlEqualContribution}

\begin{abstract}
Polyak momentum (PM), also known as the heavy-ball method, is a widely used optimization method that enjoys an asymptotic optimal worst-case complexity on quadratic objectives. However, its remarkable empirical success is not fully explained by this optimality, as the worst-case analysis --contrary to the average-case-- is not representative of the expected complexity of an algorithm. In this work we establish a novel link between PM and the average-case analysis. Our main contribution is to prove that \emph{any} optimal average-case method converges in the number of iterations to PM, under mild assumptions. This brings a new perspective on this classical method, showing that PM is asymptotically both worst-case and average-case optimal.
\end{abstract}

\section{Introduction}

Polyak momentum (PM), also known as the heavy-ball method, is a widely used optimization method. Originally developed to solve linear equations~\citep{frankel1950convergence, rutishauser1959theory}, it was generalized to smooth functions and popularized in the optimization community by Boris Polyak~\citep{polyak1964some,polyak1987introduction}.
This method has seen a renewed interest in recent years, as its stochastic variant which replaces the gradient with a stochastic estimate is effective on deep learning problems~\citep{sutskever2013importance, zhang20202dive}.

PM also enjoys a locally optimal rate of convergence for strongly convex and twice differentiable objectives. As is common within the optimization literature, this optimality is relative to the \emph{worst-case} analysis, that provides complexity bounds for \emph{any} input from a function class, no matter how unlikely.
Despite its widespread use, the worst-case is not representative of the typical behavior of optimization methods. The simplex method, for example, has a worst-case exponential complexity, that becomes polynomial when considering the average-case \citep{spielman2004smoothed}.

A more representative analysis of the typical behavior is given by the \emph{average-case} complexity, which averages the algorithm's complexity over all possible inputs. 
The average-case analysis is standard for analyzing sorting~\citep{knuth1997art} and cryptography~\citep{katz2014introduction} algorithms, to name a few.
However, little is known of the average-complexity of optimization algorithms, whose analysis depends on the often unknown probability distribution over the inputs. 

The recent work of \citet{pedregosa2020acceleration, lacotte2020optimal}
overcame this dependency on the input probability distribution through the use of random matrix theory techniques.
In the same papers, the authors noticed the convergence of some optimal average-case methods to PM, as the number of iterations grows (see Figure~\ref{fig:convergence_pm}). This is rather surprising given their crucial differences. For instance, average-case optimal methods use knowledge of the full spectral distribution, while PM only requires knowledge of its edges (i.e., smallest and largest eigenvalue).
Since this convergence was only shown on specific methods, it raises the question on whether this is a spurious phenomenon or if this holds more generally:

\begin{myenv}{Conjecture}
\centering As the number of iterations grows, all average-case optimal methods converge to Polyak momentum.

\vspace{-0.8em}\vphantom{1}
\end{myenv}

\begin{figure*}
    \centering
    \includegraphics[width=1\linewidth]{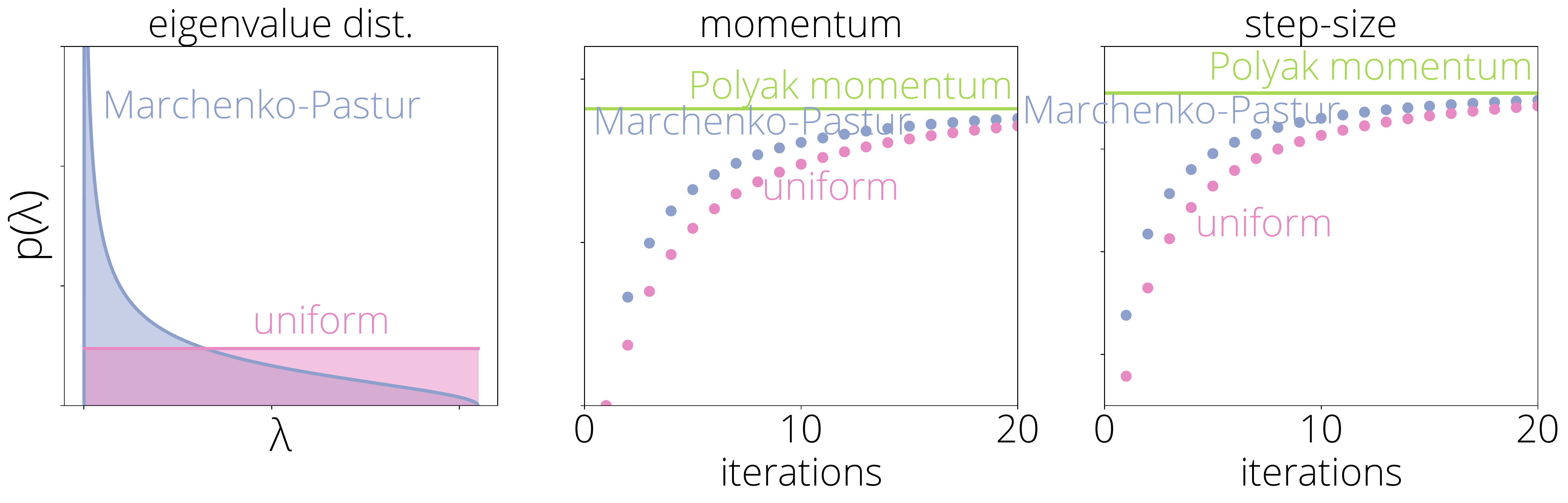}
    \caption{{\bfseries Convergence of optimal average-case methods to Polyak Momentum}. For the Marchenko-Pastur and uniform distribution of eigenvalues (left), we construct the method that has optimal average-case complexity and plot the momentum (middle) and steps-size (right) parameters. For the two methods considered, the momentum and step-size parameters converge as the number of iterations grows to those of Polyak momentum, displayed here as a straight line.
    }
    \label{fig:convergence_pm}
\end{figure*}

The {\bfseries main contribution} of this paper is to give a positive answer to this conjecture.
The main, but not so restrictive assumption, is that the probability density function of the eigenvalues is non-zero on the interval containing its support.
With this we can show the previously unknown property that PM is \emph{asymptotically} optimal under the average-case analysis, bringing a new perspectiveon the remarkable empirical performance of this classical method.
Furthermore, this statement is \emph{universal}, i.e., independent of the probability distribution over the inputs. 

\subsection{Related work}

This work draws from the fields of optimization, complexity analysis and orthogonal polynomials, of which we comment on the most closely related ideas.

\paragraph{Average-case analysis.}
The average-case analysis has a long history in computer science and numerical analysis. Often it is used to justify the superior performances of algorithms such as Quicksort \citep{Hoare1962Quicksort} and the simplex method in linear programming~\citep{spielman2004smoothed}. 
Despite this rich history, it's challenging to transfer these ideas into continuous optimization due to 
the ill-defined notion of a typical continuous optimization problem.

In the context of optimization, \citet{pedregosa2020acceleration} derived a framework for analyzing the average-case gradient-based methods and developed methods that are non-asymptotic optimal algorithms with respect to the average-case. 
Such average-case analysis finds applications in various domains. For instance, \citet{lacotte2020optimal} use this framework to derive optimal average-case algorithms to minimize least-squares with random matrix sketching. Prior to this stream of papers, \citet{berthier2018accelerated} use methods based on Jacobi polynomials to design average-case optimal gossip methods, but without generalizing the framework.

In the numerical analysis literature,~\citet{deift2019conjugate} have recently developed an average-case complexity of conjugate gradient.

\paragraph{Asymptotics or orthonormal polynomials.} A key ingredient of the proof are asymptotics or orthonormal polynomials. This is a vast subject with applications in stochastic processes~\citep{grenander1958toeplitz}, random matrix theory~\citep{deift1999orthogonal} and numerical integration~\citep{mhaskar1997introduction} to name a few. The monograph of \citep{lubinsky2000asymptotics} discusses all results used in this paper.

\paragraph{Notation.}  Throughout the paper we denote vectors in lowercase boldface ($\xx$), matrices in uppercase boldface letters ($\boldsymbol H$), and polynomials in uppercase latin letter ($P, Q$). We will sometimes omit integration variable, with the understanding that $\int \varphi \dif \mu$ is a shorthand for $\int \varphi(\lambda) \dif\mu(\lambda)$.

\section{Average-Case Analysis of Gradient-Based Methods}\label{scs:average_analysis}

The goal of the average-case analysis is to quantify the expected error $\EE \|\xx_t - \xx^\star\|^2$, where $\xx_t$ is the $t$-th update of some optimization method and the expectation is taken over all possible problem instances. To make this analysis tractable, and following \citep{pedregosa2020acceleration}, we consider quadratic optimization problems of the form
\begin{empheq}[box=\mybluebox]{equation*}\tag{OPT}\label{eq:quad_optim}
  \vphantom{\sum_0^i}\min_{\xx \in \RR^d} \Big\{ f(\xx) \defas\!\mfrac{1}{2}(\xx\!-\!\xx^\star)^\top\!\HH(\xx\!-\!\xx^\star) \Big\},
\end{empheq}
where $\HH \in \RR^{d \times d}$ is a \textit{random} symmetric positive-definite matrix and $\xx^\star$ is a \textit{random} $d$-dimensional vector which is a solution of \eqref{eq:quad_optim}.

\begin{remark} Problem \eqref{eq:quad_optim} subsumes the quadratic minimization problem $\min_{\xx} \xx^\top\HH\xx + \bb^\top \xx + c$ but the notation above will be more convenient for our purposes.
\end{remark}

\begin{remark}
The expectation in $\EE \|\xx_t - \xx^\star\|^2$ is over the inputs and not over any randomness of the algorithm, as is common in the stochastic literature. In this paper we only consider deterministic algorithms.
\end{remark}

We consider in this paper the class of \textit{first order methods}, which build $\xx_t$ using a pre-defined linear combination of an initial guess and previous gradients:
\begin{equation}
    \xx_{t} \in \xx_0 + \textbf{span}\{\nabla f(\xx_0),\;\ldots,\;\nabla f(\xx_t)\}. \label{eq:first_order_definition}
\end{equation}
This wide class includes most gradient-based optimization methods, such as gradient descent and momentum. However, it excludes  quasi-Newton methods, preconditioned gradient descent or Adam (to cite a few), as the preconditioning allows the iterates to go outside span.

\subsection{Tools of the trade: orthogonal polynomials and spectral densities}

Average-case optimal methods rely on two key concepts that we now introduce: \textit{residual orthogonal polynomials} and the \textit{expected spectral distribution}.

\subsubsection{Orthogonal (residual) polynomials}

This section defines orthogonal polynomials and residual polynomials.

\begin{definition} \label{def:orthogonal_polynomial}
    Let $\alpha$ be a non-decreasing function such that $\int Q\dif\alpha$ is finite for all polynomials $Q$.
    We will say that the sequence of polynomials $P_0, P_1, \ldots$ is orthogonal with respect to $\dif\alpha$ if $P_i$ has degree $i$ and 
\begin{equation}\label{eq:optimal_orthogonal_polynomials}
    \int_{\mathbb{R}} P_i\,P_j\dif\alpha \begin{cases}
    = 0 & \text{if } i\neq j \\
    > 0 & \text{if } i = j
    \end{cases}.
\end{equation}
Furthermore, if they verify $P_i(0) = 1$ for all $i$, we call these {\bfseries residual orthogonal} polynomials.
\end{definition}

Residual orthogonal polynomials verify a three-term recurrence \citep[\S 2.4]{fischer1996polynomial}, that is, there exists a sequence of real values $a_t, b_t$ such that
\begin{equation}\label{eq:recurence_orthogonal_polynomials}
    P_{t}(\lambda)
    = (a_t + b_t \lambda) P_{t-1}(\lambda) + (1-a_{t})P_{t-2}(\lambda)\,,
\end{equation}
where $P_0(\lambda) = 1$ and $P_1(\lambda) = 1+b_1\lambda$\,.

\subsubsection{Expected spectral distribution}

The expected spectral distribution and the extreme eigenvalues of the matrix $\HH$ play similar roles in the case of, respectively, average-case and worst-case optimal methods. They measure the problem's difficulty and define the optimal method's parameters.
\begin{definition}[Empirical/Expected Spectral Measure]
Let $\HH$ be a random matrix with eigenvalues $\{\lambda_1, \ldots, \lambda_d\}$. The {\bfseries empirical spectral measure} of $\HH$, called ${\mu}_{\HH}$, is the probability measure
\begin{equation}\label{eq:empirical_spectral_density}
    \mu_{\HH}(\lambda) \defas {\textstyle{\frac{1}{d}\sum_{i=1}^d}} \delta_{\lambda_i}(\lambda) ~,
\end{equation}
where $\delta_{\lambda_i}$ is the Dirac delta, a distribution equal to zero everywhere except at $\lambda_i$ and whose integral over the entire real line is equal to one.

Since $\HH$ is random, the empirical spectral measure $\mu_{\HH}$ is a random measure. Its expectation over $\HH$,
\begin{equation}
\mu \defas \EE_{\HH}[\mu_{\HH}]\,,
\end{equation}
is called the {\bfseries expected spectral distribution}.
\end{definition}

\begin{example}[Marchenko-Pastur density and large least squares problems]
Consider a matrix $\AA \in \RR^{n \times d}$, where each entry is an iid random variable with mean zero and variance $\sigma^2$. Then it is known that the expected spectral distribution of $\HH = \frac{1}{n}\AA^\top\!\AA$ converges to to the Marchenko-Pastur distribution \citep{marchenko1967distribution} as $n$ and $d \to \infty$ at a rate in which the asymptotic ratio $d / n \rightarrow r$ is finite.
The Marchenko-Pastur distribution $\MP$ is defined as
\begin{equation} \label{eq:mp_distribution}
     \max\{1 - \tfrac{1}{r}, 0\}\delta_0(\lambda) + \frac{\sqrt{(L - \lambda)(\lambda - \ell)}}{2 \pi \sigma^2 r \lambda}1_{\lambda \in [\ell, L]}\,.
\end{equation}
Here $\ell\defas \sigma^2(1 - \sqrt{r})^2$, $L \defas \sigma^2(1 + \sqrt{r})^2$ are the extreme nonzero eigenvalues,  $\delta_0$ is a Dirac delta at zero (which disappears if $r \geq 1$) and $1_{\lambda \in [\ell, L]}$ is a rectangular window function, equal to 1 for $\lambda \in [\ell, L]$ and 0 elsewhere.

\end{example}

\subsection{Average-case optimal methods}

With these two ingredients, we can construct the method with optimal average-case complexity. We rewrite the expected error as an integral with weight function the expected spectral density $\mu$.

\begin{theorem}\label{thm:pedregosa_rate}\citep{pedregosa2020acceleration}
Assume $\xx_0$, $\xx^{\star}$ are random variables independent of $\HH$,  satisfying $\mathbb{E}[(\xx_0-\xx^\star)(\xx_0-\xx^\star)^\top] = R^2\boldsymbol{I}$. Let $\xx_t$ be generated by a first-order method, associated to the polynomial $P_t$. Then the expected error at iteration $t$ reads
\begin{empheq}{equation}\label{eq:error_norm_x}
  \vphantom{\sum_0^i}\mathbb{E}\|\xx_t-\xx^\star\|^2 = {\overbrace{R^2\vphantom{R_t}}^{\text{initialization}}}\int_\RR {\underbrace{P_t^2}_{\text{algorithm}}} {\overbrace{\dif\mu}^{\text{problem}}}
  \,.
\end{empheq}
\end{theorem}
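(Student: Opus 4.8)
The plan is to reduce the expected squared error to a deterministic integral against the expected spectral measure in two moves: first write the error at iteration $t$ as a fixed polynomial in $\HH$ applied to the initial error, then take the expectation over the initialization and over $\HH$ in turn.

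First I would establish the \emph{residual polynomial representation}. On the quadratic \eqref{eq:quad_optim} one has $\nabla f(\xx) = \HH(\xx - \xx^\star)$, so by induction on $t$ the definition \eqref{eq:first_order_definition} produces a polynomial $P_t$ with $\deg P_t \le t$ and $P_t(0) = 1$ such that $\xx_t - \xx^\star = P_t(\HH)(\xx_0 - \xx^\star)$. The base case is $P_0 \equiv 1$; for the inductive step, write $\xx_t = \xx_0 + \sum_k c_{t,k}\nabla f(\xx_k) = \xx_0 + \sum_k c_{t,k}\HH(\xx_k - \xx^\star)$, substitute the inductive hypothesis on $\xx_k - \xx^\star$, and collect terms to obtain $P_t(\lambda) = 1 + \lambda\sum_k c_{t,k}P_k(\lambda)$, which visibly has degree at most $t$ and value $1$ at the origin. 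This is precisely the polynomial that the statement associates with the method.

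Next, since $\HH$ and hence $P_t(\HH)$ are symmetric, $\|\xx_t - \xx^\star\|^2 = (\xx_0 - \xx^\star)^\top P_t(\HH)^2 (\xx_0 - \xx^\star)$. Conditioning on $\HH$ and using that $(\xx_0,\xx^\star)$ is independent of $\HH$, together with the scalar identity $\EE[\vv^\top \MM \vv] = \tr(\MM\, \EE[\vv\vv^\top])$ applied with $\vv = \xx_0 - \xx^\star$ and the hypothesis $\EE[(\xx_0-\xx^\star)(\xx_0-\xx^\star)^\top] = R^2 \II$, we get $\EE[\,\|\xx_t-\xx^\star\|^2 \mid \HH\,] = R^2\, \tr(P_t(\HH)^2)$. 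Diagonalizing $\HH$, the trace equals $\sum_{i=1}^{d} P_t(\lambda_i)^2$, which by the definition \eqref{eq:empirical_spectral_density} of the empirical spectral measure is (up to the dimensional constant $d$, folded into the normalization) the integral $\int_\RR P_t^2 \dif\mu_\HH$. Taking expectation over $\HH$, using $\mu = \EE_\HH[\mu_\HH]$ and linearity of the integral, then yields $\EE\|\xx_t-\xx^\star\|^2 = R^2 \int_\RR P_t^2 \dif\mu$.

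The one genuinely delicate point is the first step: unfolding the recursive span in \eqref{eq:first_order_definition} into a single polynomial evaluated at $\HH$ while keeping track of both the degree bound and the normalization $P_t(0)=1$ — the latter being the algebraic shadow of the fact that a first-order method can make no progress inside $\ker \HH$. Everything afterwards is a brief trace computation; the only other thing worth a line of justification is the exchange of $\EE_\HH$ with the spectral integral in the last step, which is harmless since $P_t^2$ is a fixed polynomial and the spectrum of $\HH$ is controlled under the standing assumptions.
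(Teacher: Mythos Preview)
The paper does not supply its own proof of this theorem; it is quoted from \citet{pedregosa2020acceleration} and used as a black box. Your argument is the standard one and is correct: the residual-polynomial identity $\xx_t-\xx^\star = P_t(\HH)(\xx_0-\xx^\star)$ followed by the trace trick $\EE[\vv^\top\MM\vv] = \tr(\MM\,\EE[\vv\vv^\top])$ and then averaging over $\HH$ is exactly how this result is established in the cited work.

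One point worth making explicit rather than parenthetical: with the empirical spectral measure normalized as in \eqref{eq:empirical_spectral_density}, one has $\tr(P_t(\HH)^2) = d\int P_t^2\,\dif\mu_\HH$, so the computation actually yields $\EE\|\xx_t-\xx^\star\|^2 = d\,R^2\int P_t^2\,\dif\mu$. You flag this as ``folded into the normalization,'' which is the right instinct --- indeed, setting $t=0$ in the theorem gives $\EE\|\xx_0-\xx^\star\|^2 = R^2$ on the right, whereas the covariance hypothesis forces $\EE\|\xx_0-\xx^\star\|^2 = \tr(R^2\II) = dR^2$, so the factor $d$ is a harmless inconsistency in the statement as reproduced here rather than a flaw in your reasoning. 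It would strengthen the write-up to say this outright rather than leave it in parentheses.
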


The optimal first order method is obtained by minimizing the above identity over the space of residual polynomials of degree~$t$. This turns out to be equivalent to finding a sequence of residual polynomials $\{P_i\}$ orthogonal w.r.t. the weight function $\lambda \mu(\lambda)$, as shown in the following theorem.

\begin{theorem}\label{thm:optimal_polynomial}
\citep{pedregosa2020acceleration}
Let $a_t$ and $b_t$ be the coefficients of the three-term recurrence \eqref{eq:recurence_orthogonal_polynomials} for the sequence of residual polynomials orthogonal w.r.t. $\lambda \dif\mu(\lambda)$. 
Then the following method has optimal average-case complexity over the class of problems \eqref{eq:quad_optim}:\footnote{Throughout the paper, we will color-code {\color{colormomentum} momentum} and {\color{colorstepsize}step-size} parameters.}
\begin{align} \label{eq:optimal_algorithm}
    &\xx_1 = \xx_0 + {\color{colorstepsize}b_1} \nabla f(\xx_0),\\
    &\xx_{t} = \xx_{t-1} + {\color{colormomentum}(a_t-1)}(\xx_{t-1}-\xx_{t-2}) + {\color{colorstepsize}b_t} \nabla f(\xx_{t-1})\nonumber\,.
\end{align}
\end{theorem}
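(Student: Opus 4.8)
The plan is to prove Theorem~\ref{thm:optimal_polynomial} by combining the exact error identity of Theorem~\ref{thm:pedregosa_rate} with a classical characterization of orthogonal polynomials as minimizers of a weighted $L^2$ norm among monic (here, residual) polynomials of fixed degree, and then to translate the three-term recurrence of those optimal polynomials into an update on the iterates $\xx_t$. First I would recall that any first-order method in the sense of \eqref{eq:first_order_definition} applied to the quadratic \eqref{eq:quad_optim} produces iterates of the form $\xx_t - \xx^\star = P_t(\HH)(\xx_0 - \xx^\star)$ for some degree-$t$ polynomial $P_t$ with $P_t(0)=1$ (the residual normalization is forced because at $\HH=0$ there is no gradient information and $\xx_t=\xx_0$); conversely every such residual polynomial is realizable. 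Hence, by Theorem~\ref{thm:pedregosa_rate}, minimizing $\EE\|\xx_t-\xx^\star\|^2$ over first-order methods is \emph{exactly} the problem $\min_{P_t}\, R^2\int_\RR P_t^2 \dif\mu$ subject to $\deg P_t \le t$ and $P_t(0)=1$.

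The second step is the optimization itself. I would introduce the inner product $\langle P, Q\rangle \defas \int_\RR P(\lambda)Q(\lambda)\,\lambda\,\dif\mu(\lambda)$ and note that, writing $P_t(\lambda) = 1 + \lambda Q_{t-1}(\lambda)$ with $\deg Q_{t-1}\le t-1$ (every residual polynomial factors this way since $P_t(0)=1$), we get $\int P_t^2 \dif\mu = \int (1+\lambda Q_{t-1})^2 \dif\mu = \mu(\RR) + 2\int Q_{t-1}\,\lambda\,\dif\mu + \int Q_{t-1}^2 \lambda^2 \dif\mu$. This is a strictly convex quadratic in the coefficients of $Q_{t-1}$ with respect to the measure $\lambda\,\dif\mu$; its unique minimizer is the orthogonal projection of the constant polynomial $-1$ onto $\Span\{1,\lambda,\ldots,\lambda^{t-1}\}$ in $L^2(\lambda\,\dif\mu)$. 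A short computation (or invoking the standard least-squares/orthogonality argument) shows the minimizing $P_t$ is characterized by $\int P_t\,\lambda^j\,\lambda\,\dif\mu = 0$ for $j=0,\ldots,t-1$, i.e. $P_t$ is orthogonal to all lower-degree polynomials with respect to $\lambda\,\dif\mu$ — so $\{P_t\}$ is precisely the sequence of residual orthogonal polynomials for the weight $\lambda\,\dif\mu$.

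The third step is to unwind the recurrence into the stated algorithm. By the three-term recurrence \eqref{eq:recurence_orthogonal_polynomials} for residual orthogonal polynomials, $P_t(\lambda) = (a_t + b_t\lambda)P_{t-1}(\lambda) + (1-a_t)P_{t-2}(\lambda)$ with $P_0=1$, $P_1=1+b_1\lambda$. Substituting $\xx_t - \xx^\star = P_t(\HH)(\xx_0-\xx^\star)$ and using $\nabla f(\xx_{t-1}) = \HH(\xx_{t-1}-\xx^\star) = \HH P_{t-1}(\HH)(\xx_0-\xx^\star)$, the recurrence gives $\xx_t - \xx^\star = a_t(\xx_{t-1}-\xx^\star) + b_t\nabla f(\xx_{t-1}) + (1-a_t)(\xx_{t-2}-\xx^\star)$; rearranging and cancelling $\xx^\star$ (the coefficients $a_t + (1-a_t) = 1$ sum to one, which is exactly the residual property $P_t(0)=1$) yields $\xx_t = \xx_{t-1} + (a_t-1)(\xx_{t-1}-\xx_{t-2}) + b_t\nabla f(\xx_{t-1})$, and for $t=1$ it gives $\xx_1 = \xx_0 + b_1\nabla f(\xx_0)$. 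Since this holds as a matrix identity for the random $\HH$, it holds pathwise, so the method \eqref{eq:optimal_algorithm} realizes the optimal $P_t$ for every problem instance simultaneously and is therefore average-case optimal.

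The main obstacle, and the place where care is needed, is the first step: justifying that the class of first-order methods \eqref{eq:first_order_definition} corresponds \emph{exactly} — no more, no less — to residual polynomials of degree $\le t$, including that the correspondence is instance-independent (the same polynomial coefficients must work for all $\HH$, since a first-order method is a fixed linear rule). One must check that the span of $\{\nabla f(\xx_0),\ldots,\nabla f(\xx_{t-1})\}$ equals $\{\HH(\xx_0-\xx^\star), \HH^2(\xx_0-\xx^\star),\ldots,\HH^{t-1}(\xx_0-\xx^\star), \ldots\}$ — i.e. a Krylov subspace — and that the degree-$t$ bound is tight; this is where the restriction to gradient-span methods (excluding preconditioning) is used. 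A secondary technical point is ensuring finiteness of all the integrals $\int \lambda^j \dif\mu$ so the inner product $\langle\cdot,\cdot\rangle$ is well defined and the orthogonal polynomial sequence exists up to degree $t$ — which is guaranteed by the hypothesis in Definition~\ref{def:orthogonal_polynomial} together with $\HH$ being positive-definite (so $\lambda\,\dif\mu$ has infinitely many points of support generically, or at least enough for the first $t$ polynomials).
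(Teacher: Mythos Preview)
The paper does not actually prove Theorem~\ref{thm:optimal_polynomial}: it is quoted from \citep{pedregosa2020acceleration} and used as a black box, so there is no proof in the present paper to compare against. That said, your argument is correct and is essentially the standard proof one finds in that reference: (i) identify first-order methods on quadratics with residual polynomials applied to $\HH$, (ii) minimize $\int P_t^2\,\dif\mu$ over residual polynomials by writing $P_t(\lambda)=1+\lambda Q_{t-1}(\lambda)$ and observing that the first-order optimality conditions read $\int P_t(\lambda)\,\lambda^{j}\,\lambda\,\dif\mu(\lambda)=0$ for $j=0,\dots,t-1$, hence $P_t$ is the residual orthogonal polynomial for the weight $\lambda\,\dif\mu$, and (iii) translate the three-term recurrence into the momentum iteration using $\nabla f(\xx_{t-1})=\HH P_{t-1}(\HH)(\xx_0-\xx^\star)$ and the fact that the recurrence coefficients sum to one at $\lambda=0$.

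One minor remark on your Step~1: the correspondence between first-order methods and residual polynomials is slightly more delicate than you suggest. A ``first-order method'' in the sense of \eqref{eq:first_order_definition} is allowed to choose the combination coefficients adaptively based on observed gradients, which in principle could make the realized polynomial depend on $\HH$. What makes the argument go through is that the optimal residual polynomial $P_t$ depends only on $\mu$ (not on the particular realization of $\HH$), and it \emph{can} be implemented by a method with fixed, instance-independent coefficients $a_t,b_t$ --- namely \eqref{eq:optimal_algorithm}. So the optimal value over the larger class of adaptive gradient-span methods is attained by a non-adaptive one, which is all that is needed. Your write-up hints at this (``instance-independent'') but the logic would be cleaner if stated as: the minimum of $\int P_t^2\dif\mu$ over all residual polynomials is a lower bound on the average-case error of any first-order method, and the explicit recursion \eqref{eq:optimal_algorithm} achieves it.
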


Due to the dependency of the coefficients $a_t, b_t$ on the expected spectral distribution, equation \eqref{eq:optimal_algorithm} does not represents a single scheme, but rather a family of algorithms: each different expected spectral distribution generates a different optimal method. Below is an example of such optimal algorithm w.r.t the Marchenko-Pastur expected spectral distribution.

\begin{example}[Marchenko-Pastur acceleration]\label{ex:mp}
Let $\dif\mu$ be the density associated with the Marchenko-Pastur distribution. Then, the recurrence of the optimal average-case method associated with this distribution is
\begin{equation*}\label{algo:mp_algo}
\begin{split}
    &\rho = \mfrac{1+r}{\sqrt{r}}\,,~\delta_{0} = 0;\\
    & \xx_1 = \xx_0-{\color{colorstepsize}\mfrac{1}{(1+r)\sigma^2}}\nabla f(\xx_0)\,;~\\
    &\delta_{t} = -({\rho+\delta_{t-1}})^{-1}\,; \\ 
    &\xx_{t} = \xx_{t-1} + {\color{colormomentum}\left(1 + \rho\delta_t\right)}(\xx_{t-2} - \xx_{t-1}) +  {\color{colorstepsize}\mfrac{ \delta_t}{\sigma^2\sqrt{r}}}\nabla f(\xx_{t-1})\,.
\end{split}
\end{equation*}
The coefficients come from the orthogonal polynomials w.r.t. $\lambda \dif\mu(\lambda)$, which is a shifted Chebyshev polynomials of the second kind.
\end{example}

\subsection{Polyak Momentum and worst-case optimality}

The Polyak momentum algorithm~\citep{polyak1964some}
has an optimal worst-case convergence rate over the class of first order methods with constant coefficients~\citep{polyak1987introduction, scieur2017integration}. 
The method requires knowledge of the smallest and largest eigenvalue of the Hessian $\HH$ (denoted $\ell$ and $L$ respectively) and iterates as follows:
\begin{align}\label{algo:pm_algo}\tag{PM}
&\xx_1 = \xx_0 - {\color{brown}\mfrac{2}{L + \ell}}\nabla f(\xx_0)\\
&\xx_{t+1} = \xx_t + {\color{colormomentum}\textstyle{\Big(\frac{\sqrt{L}-\sqrt{\ell}}{\sqrt{L}+\sqrt{\ell}}\Big)^2}}(\xx_{t} - \xx_{t-1}) - {\color{colorstepsize} \textstyle\Big(\frac{ 2}{\sqrt{L}+\sqrt{\ell}}\Big)^2}\nabla  \nonumber
    f(\xx_t)
\end{align}

\vspace{0.5em}\begin{remark} Unlike the Marchenko-Pastur accelerated method of Example~\ref{ex:mp}, coefficients of this method are constant in the iterations. Furthermore, these coefficients only depend on the edges  of the spectral distribution and not on the full density.
\end{remark}


\break

\section{All Roads Lead to Polyak Momentum}\label{scs:asymptotic_heavyball}


\begin{myenv}{Main result}
\begin{theorem}\label{thm:conv_hb}
    Assume the density function $\dif\mu$ is strictly positive in the interval $[\ell, L]$ with $\ell > 0$ and let ${a_t}, b_t$ be the parameters of the optimal average-case method (Theorem~\ref{thm:optimal_polynomial}). Then these parameters converge to those of \eqref{algo:pm_algo}. More precisely, we have the limits:
    \begin{align} \label{eq:limit_method}
        &\lim_{t \to \infty} {\color{colormomentum} a_t-1} = \underbrace{\left(\mfrac{\sqrt{L}-\sqrt{\ell}}{\sqrt{L}+\sqrt{\ell}}\right)^2}_{=\,\text{\eqref{algo:pm_algo} momentum}}\,, \;\;\quad\text{ and }\\
        &\lim_{t \to \infty}{\color{colorstepsize} b_t} = \underbrace{- \left(\mfrac{2}{\sqrt{L} + \sqrt{\ell}}\right)^2}_{=\,\text{\eqref{algo:pm_algo} step-size}}\,.
    \end{align}
~
\end{theorem}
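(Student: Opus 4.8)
The plan is to reduce the statement to classical \emph{ratio asymptotics} of orthogonal polynomials. Set $\dif\nu(\lambda)\defas\lambda\,\dif\mu(\lambda)$; since $\ell>0$ and $\dif\mu$ has a strictly positive density on $[\ell,L]$, the measure $\nu$ is supported exactly on $[\ell,L]$, is absolutely continuous, and has an a.e.-positive density there. Let $\pi_t$ be the \emph{monic} orthogonal polynomials of $\nu$, with three-term recurrence $\pi_t(\lambda)=(\lambda-\gamma_t)\,\pi_{t-1}(\lambda)-\tau_t\,\pi_{t-2}(\lambda)$. The zeros of $\pi_t$ lie in $(\ell,L)\subset(0,\infty)$, so $\pi_t(0)\neq 0$ and $\operatorname{sign}\pi_t(0)=(-1)^t$; consequently the residual polynomials of Theorem~\ref{thm:optimal_polynomial} are $P_t=\pi_t/\pi_t(0)$. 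Dividing the recurrence for $\pi_t$ by $\pi_t(0)$ and writing $\rho_t\defas\pi_t(0)/\pi_{t-1}(0)$ (which is negative), matching with \eqref{eq:recurence_orthogonal_polynomials} yields
\[
 b_t=\frac1{\rho_t},\qquad a_t=-\frac{\gamma_t}{\rho_t},\qquad\text{with}\quad \rho_t=-\gamma_t-\frac{\tau_t}{\rho_{t-1}}.
\]
So everything reduces to computing $\lim_t\gamma_t$ and $\lim_t\rho_t$.

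Next I would invoke the asymptotic theory. Because $\supp\nu=[\ell,L]$ and $\nu'>0$ a.e.\ there, $\nu$ lies in the Nevai class by Rakhmanov's theorem (all the needed results are collected in the monograph \citep{lubinsky2000asymptotics}). This gives $\gamma_t\to\tfrac{\ell+L}{2}$ and $\tau_t\to\bigl(\tfrac{L-\ell}{4}\bigr)^2$, and, more importantly, the ratio asymptotics
\[
 \frac{\pi_t(z)}{\pi_{t-1}(z)}\;\longrightarrow\;\tfrac12\Bigl(\bigl(z-\tfrac{\ell+L}{2}\bigr)+\sqrt{(z-\ell)(z-L)}\Bigr)
\]
uniformly on compact subsets of $\mathbb{C}\setminus[\ell,L]$, with the branch of the square root asymptotic to $z$ at infinity. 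Evaluating at $z=0$: on the real axis below $\ell$ this branch is negative, so $\sqrt{(0-\ell)(0-L)}=-\sqrt{\ell L}$, hence $\rho_t\to\rho_\infty\defas-\tfrac14(\sqrt L+\sqrt\ell)^2$. (As a check, $\rho_\infty$ solves the fixed-point equation $\rho^2+\tfrac{\ell+L}{2}\rho+\bigl(\tfrac{L-\ell}{4}\bigr)^2=0$ obtained by letting $t\to\infty$ in the recursion above; the other root $-\tfrac14(\sqrt L-\sqrt\ell)^2$ is repelling and is the one the ratio-asymptotics theorem discards.)

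Plugging these limits in finishes the proof: $b_t\to 1/\rho_\infty=-4/(\sqrt L+\sqrt\ell)^2=-\bigl(2/(\sqrt L+\sqrt\ell)\bigr)^2$, the \eqref{algo:pm_algo} step-size; and $a_t\to-\gamma_\infty/\rho_\infty=\tfrac{2(L+\ell)}{(\sqrt L+\sqrt\ell)^2}$, so $a_t-1\to\tfrac{2(L+\ell)-(\sqrt L+\sqrt\ell)^2}{(\sqrt L+\sqrt\ell)^2}=\tfrac{(\sqrt L-\sqrt\ell)^2}{(\sqrt L+\sqrt\ell)^2}$, the \eqref{algo:pm_algo} momentum.

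The main obstacle is the middle step: rigorously placing $\nu=\lambda\,\dif\mu$ within the scope of Rakhmanov's theorem (this is exactly where the hypothesis ``$\dif\mu$ strictly positive on $[\ell,L]$'' is consumed — it forces $\supp\nu=[\ell,L]$ and $\nu'>0$ a.e.), and extracting the ratio asymptotics at $z=0$ with the \emph{correct} branch, since the wrong branch would select the repelling fixed point and hence the wrong, unstable constants. The remainder is bookkeeping of the three-term recurrence. If one prefers to avoid quoting ratio asymptotics, an alternative is to analyze $\rho_t=-\gamma_t-\tau_t/\rho_{t-1}$ directly as an asymptotically autonomous scalar recursion: the sign of $\rho_t$ together with the crude bound $0<-\rho_t<\gamma_t$ gives compactness, and a monotonicity argument on the limiting map (whose only attracting fixed point is $-\tfrac14(\sqrt L+\sqrt\ell)^2$) pins down the limit; this is elementary but more laborious.
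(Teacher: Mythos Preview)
Your proposal is correct and follows essentially the same route as the paper: both reduce the claim to Rakhmanov's ratio asymptotics for orthogonal polynomials with respect to $\lambda\,\dif\mu$, together with the Nevai-class limits of the recurrence coefficients. The only cosmetic difference is normalization and coordinates---you work with \emph{monic} polynomials directly on $[\ell,L]$ and evaluate the ratio at $z=0$, whereas the paper passes to \emph{orthonormal} polynomials on $[-1,1]$ via the affine map $m$ and evaluates at $m_0=m(0)>1$; the two computations are equivalent, and your branch check for $\sqrt{(z-\ell)(z-L)}$ at $z=0$ plays the same role as the paper's observation that $m_0>1$.
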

\end{myenv}

The key insight of the proof is to cast the three-term recurrence of residual \textit{orthogonal} polynomials into orthonormal polynomials\footnote{A sequence $Q_1, Q_2, ...$ of orthogonal polynomials with respect to $\dif\omega$ is orthonormal if $\int Q_i^2 \dif \omega = 1$.}  in the interval $[-1,\,1]$. Once this is done, we will use asymptotic properties of these polynomials. The proof is split into three steps.
\begin{itemize}[leftmargin=*]
    \item Step 0 introduces notation and some known results.
    \item Step 1 writes the coefficients of optimal average-case methods in terms of properties of a class of orthonormal polynomials in the $[-1, 1]$ interval.
    \item Step 2 computes the limits of the expressions derived in the previous step by using known asymptotic properties of orthonormal polynomials.
\end{itemize}

{\bfseries \underline{Step 0}: Definitions.} In the classical theory of orthogonal polynomials, the weight function associated with orthogonal polynomials is defined in the interval $[-1, 1]$. However, in our case the spectral densities are instead defined in $[\ell, L]$. To translate results from one setting to the other we define the following linear mapping from $[\ell, L]$ to $[-1, 1]$:
\begin{equation}
    m(\lambda) = \mfrac{L+\ell}{L - \ell} - \mfrac{2}{L-\ell}\lambda\,.
\end{equation}
For notational convenience, we will also use the shorthand $m_0 \defas m(0)$.
We now define $Q_i(m(\cdot))$ as the $i$-th degree orthonormal polynomial with respect to the weight function $\lambda\mu(\lambda)$. That is, the sequence $Q_1, Q_2, \ldots$ verifies
\begin{equation}
    \int_{\ell}^L Q_i(m(\lambda))Q_j(m(\lambda)) \lambda \dif\mu(\lambda) = \begin{cases} 1 \text{ if $i=j$} \\ 0 \text{ otherwise\,.}\end{cases}\,,
\end{equation}
where $\delta_{ij}$ represents Kronecker's delta.
Like residual orthogonal polynomials, orthonormal  polynomials also verify a three-term recurrence relation. This time, the relation is of the form
\begin{equation}\label{eq:recurrence_orthonormal_poly}
    \alpha_t Q_t(\xi) = (\xi - \beta_t)Q_{t-1}(\xi) - \alpha_{t-1}Q_{t-2}(\xi)\,,
\end{equation}
and depends on coefficients $\alpha_t, \beta_t$:

{\bfseries \underline{Step 1}: Parameters of optimal method and orthonormal polynomials}.
In this step we derive the recurrence relation for an \emph{orthonormal} family with respect to the density $\dif\nu$. This will allow us to use existing results on the asymptotics of orthonormal polynomials.

\begin{lemma}\label{lemma:orthonormal_a_b}
Let $a_t, b_t$ be the parameters associated with optimal average-case method (Theorem~\ref{thm:optimal_polynomial}). These coefficients verify the following identity,
\begin{align}\label{eq:reformation_a_b}
    &{\color{colormomentum}1 - a_t} = -\frac{\alpha_{t-1}}{ \alpha_t}\frac{Q_{t-2}(m_0)}{ Q_t(m_0)} ~, \quad \text{ and }\\
    &{\color{colorstepsize} b_t} =  - \frac{2}{\alpha_t (L - \ell)} \frac{Q_{t-1}(m_0)}{Q_{t}(m_0)}~.
\end{align}
\end{lemma}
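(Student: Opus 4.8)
The plan is to relate the two three-term recurrences --- the one for residual orthogonal polynomials $P_t$ with respect to $\lambda\dif\mu$ in the variable $\lambda\in[\ell,L]$, and the one for orthonormal polynomials $Q_t$ with respect to the pushforward of $\lambda\dif\mu$ in the variable $\xi=m(\lambda)\in[-1,1]$ --- by exploiting the fact that both families span the same spaces of polynomials and both are orthogonal with respect to the same measure (up to normalization). Concretely, $P_t$ and $Q_t\circ m$ both have degree $t$ and both are orthogonal w.r.t. $\lambda\dif\mu(\lambda)$, so they differ only by a multiplicative constant: $P_t(\lambda) = c_t\, Q_t(m(\lambda))$ for some scalar $c_t$. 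Evaluating at $\lambda=0$ and using the residual normalization $P_t(0)=1$ gives $c_t = 1/Q_t(m_0)$, i.e.
\begin{equation*}
    P_t(\lambda) = \frac{Q_t(m(\lambda))}{Q_t(m_0)}\,.
\end{equation*}

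From here I would substitute this expression into the orthonormal recurrence \eqref{eq:recurrence_orthonormal_poly}. Writing $\xi = m(\lambda)$, the recurrence $\alpha_t Q_t(\xi) = (\xi-\beta_t)Q_{t-1}(\xi) - \alpha_{t-1}Q_{t-2}(\xi)$ becomes, after dividing by the appropriate $Q_\bullet(m_0)$ factors to convert each $Q_k$ into $P_k$,
\begin{equation*}
    \alpha_t Q_t(m_0) P_t(\lambda) = (m(\lambda)-\beta_t)Q_{t-1}(m_0)P_{t-1}(\lambda) - \alpha_{t-1}Q_{t-2}(m_0)P_{t-2}(\lambda)\,.
\end{equation*}
Now I expand $m(\lambda) = m_0 - \tfrac{2}{L-\ell}\lambda$ (using $m(0)=m_0$), so that $(m(\lambda)-\beta_t) = (m_0-\beta_t) - \tfrac{2}{L-\ell}\lambda$. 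This rewrites the right-hand side as an affine function of $\lambda$ times $P_{t-1}$ plus a multiple of $P_{t-2}$ --- exactly the shape of the residual recurrence \eqref{eq:recurence_orthogonal_polynomials}, $P_t(\lambda) = (a_t + b_t\lambda)P_{t-1}(\lambda) + (1-a_t)P_{t-2}(\lambda)$. Matching the coefficient of $P_{t-1}(\lambda)$ and of $P_{t-2}(\lambda)$ on both sides (after dividing through by $\alpha_t Q_t(m_0)$) yields
\begin{equation*}
    a_t + b_t\lambda = \frac{Q_{t-1}(m_0)}{\alpha_t Q_t(m_0)}\Big((m_0-\beta_t) - \tfrac{2}{L-\ell}\lambda\Big)\,, \qquad 1 - a_t = -\frac{\alpha_{t-1}Q_{t-2}(m_0)}{\alpha_t Q_t(m_0)}\,.
\end{equation*}
The second identity is precisely the claimed formula for $1-a_t$. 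Reading off the coefficient of $\lambda$ in the first identity gives $b_t = -\tfrac{2}{\alpha_t(L-\ell)}\cdot\tfrac{Q_{t-1}(m_0)}{Q_t(m_0)}$, which is the claimed formula for $b_t$. (Matching the constant term also pins down $a_t$ itself in terms of $\beta_t$, but that is not needed for the statement as phrased.)

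The only genuine subtlety --- and the step I would be most careful about --- is justifying $P_t(\lambda) = Q_t(m(\lambda))/Q_t(m_0)$ rigorously: this requires that orthogonal polynomials of a given degree with respect to a fixed positive measure are unique up to scaling (standard, given that the measure has infinitely many points of support, which holds here since $\dif\mu$ has a density positive on $[\ell,L]$), and that $Q_t(m_0)\neq 0$ so the normalization is well-defined. The latter holds because $m_0 = m(0)$ lies strictly outside $[-1,1]$ (as $\ell>0$), and orthonormal polynomials on $[-1,1]$ have all their zeros inside $(-1,1)$; hence $Q_t(m_0)\neq 0$ for every $t$. Everything else is bookkeeping: carefully tracking the linear change of variables $m$ and collecting coefficients of $\lambda$. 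I would also note in passing that this lemma does not yet use the strict-positivity hypothesis in any essential way beyond well-definedness --- that assumption will be needed in Step 2 to invoke the asymptotics of $Q_t$ and $\alpha_t,\beta_t$ (Rakhmanov-type / Nevai-class results) to take the limits.
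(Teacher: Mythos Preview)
Your proposal is correct and follows essentially the same approach as the paper: define $P_t(\lambda)=Q_t(m(\lambda))/Q_t(m_0)$, plug into the orthonormal three-term recurrence, expand $m(\lambda)$, and match coefficients with \eqref{eq:recurence_orthogonal_polynomials}. If anything, your write-up is slightly more careful than the paper's in justifying the proportionality $P_t\propto Q_t\circ m$ via uniqueness of orthogonal polynomials and in explaining why $Q_t(m_0)\neq 0$ (zeros of $Q_t$ lie in $(-1,1)$ while $m_0>1$).
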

\begin{proof}
Since orthogonality is preserved after multiplication by a scalar, the polynomial $P_t(\lambda) \defas Q_t(m(\lambda)) / Q_t(m_0)$ is also orthogonal with respect to the weight function $\lambda \dif\mu(\lambda)$. The normalization  $1/Q_t(m_0)$ ensures $P_t$ is a residual polynomial. Note that $Q_t(m_0)$ cannot be zero because $m_0$ lies outside of the weight function's support $[-1,\,1]$.

Using Theorem~\ref{thm:optimal_polynomial}, the coefficients of the optimal average-case method can be derived from the three-term recurrence of this polynomial. Indeed, starting from the three-term recurrence of $Q_i$ \eqref{eq:recurrence_orthonormal_poly}, we obtain for $P_t$
\begin{align*}
P_t(\lambda) &= (m(\lambda)  - \beta_{t-1})\mfrac{Q_{t-1}(m(\lambda))}{ \alpha_t \, Q_t(m_0)} - \alpha_{t-1}\mfrac{Q_{t-2}(m(\lambda))}{ \alpha_t \, Q_t(m_0)}\\
& = \underbrace{\mfrac{1}{\alpha_t}\left(\mfrac{L+\ell}{L - \ell}   - \beta_{t-1} - \mfrac{2}{L-\ell}\lambda\right)\mfrac{Q_{t-1}(m_0)}{ \, Q_t(m_0) }}_{= (a_t + b_t\lambda)} P_{t-1}\\
&\qquad - \underbrace{\mfrac{\alpha_{t-1}}{ \alpha_t}\mfrac{Q_{t-2}(m_0)}{ Q_t(m_0)}}_{=-(1 - a_t)} P_{t-2}(\lambda)\,,
\end{align*}
where in the last line we used the definition of $m$ and the identity $P_{i}(\lambda) = Q_i(m(\lambda))/Q_i(m_0)$ for $i=t-1$ and $i=t-2$.
Finally, matching the coefficients of this recurrence with \eqref{eq:recurence_orthogonal_polynomials} yields the identity in the Lemma.
\end{proof}

{\bfseries \underline{Step 2}: Asymptotics of orthonormal polynomials.} This step uses known result on asymptotics of orthonormal polynomials to compute the limit $t \to \infty$ of expressions derived in the previous step.

We use the following theorem on the asymptotic ratio between two successive orthonormal polynomials.

\begin{theorem}[\citep{rakhmanov1983asymptotics};\footnote{The original version of this theorem was stated for monic orthogonal polynomials but is valid for polynomials with other normalizations like orthonormal, see for instance \citep{lubinsky2000asymptotics, denisov2004rakhmanov}. } Ratio Asymptotics] \label{thm:rakhmanov1983asymptotics}
    Let $\{Q_i\}$ be a sequence of orthonormal polynomials with respect to a weight function strictly positive in $]-1, 1[$, and zero elsewhere. Then  we have the following limit for the ratio of polynomials evaluated outside of the support,
    \begin{equation}\label{eq:rakhmanov}
         \lim_{t\rightarrow\infty} \frac{Q_t(\xi)}{Q_{t-1}(\xi)} = \xi + \sqrt{\xi^2-1} \quad \text{for} \;\; \xi > 1\,.
    \end{equation}
\end{theorem}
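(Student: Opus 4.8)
I would decouple the statement into two parts: (i) the convergence of the recurrence coefficients $\alpha_t \to \tfrac12$ and $\beta_t \to 0$ in \eqref{eq:recurrence_orthonormal_poly}, which is \emph{Rakhmanov's theorem proper} and is the one deep ingredient; and (ii) an elementary argument deducing the ratio limit \eqref{eq:rakhmanov} from (i) via the classical asymptotic theory of linear difference equations. I expect (i) to be the only genuine obstacle; (ii) is bookkeeping that I would carry out in full.

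\emph{Part (ii): from coefficient limits to the ratio.} Write \eqref{eq:recurrence_orthonormal_poly} as $Q_t(\xi) = \tfrac{\xi-\beta_t}{\alpha_t}Q_{t-1}(\xi) - \tfrac{\alpha_{t-1}}{\alpha_t}Q_{t-2}(\xi)$. Under (i) the coefficient sequences converge to those of the constant-coefficient recurrence $Y_t = 2\xi\,Y_{t-1} - Y_{t-2}$ (the Chebyshev recurrence), whose characteristic equation $r^2 - 2\xi r + 1 = 0$ has roots $r_{\pm} = \xi \pm \sqrt{\xi^2-1}$ with $r_+r_- = 1$; for $\xi > 1$ these are real and of distinct modulus, $0 < r_- < 1 < r_+$. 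By Poincar\'e's theorem on difference equations with asymptotically constant coefficients (applicable precisely because the roots have distinct moduli), any solution that is not eventually zero has $Q_t(\xi)/Q_{t-1}(\xi)$ convergent, with limit $r_+$ or $r_-$. Now $Q_t(\xi) \neq 0$ for every $t$, since the zeros of orthogonal polynomials lie in the convex hull of the support, i.e.\ in $[-1,1]$; so the ratios are well defined and, being convergent to a positive number, of eventually constant sign. To exclude the limit $r_-$ I would use that $\sum_{t} Q_t(\xi)^2 = +\infty$ for $\xi$ outside the support: indeed the Christoffel function $\lambda_{t+1}(\mu,\xi)=\big(\sum_{k\le t} Q_k(\xi)^2\big)^{-1}$ equals $\min\{\int P^2\dif\mu : \deg P\le t,\ P(\xi)=1\}$, and testing against $P(\lambda)=T_t(\lambda)/T_t(\xi)$ (with $T_t$ the Chebyshev polynomial of the first kind, $\lvert T_t\rvert\le 1$ on $[-1,1]$ and $T_t(\xi)\to\infty$) gives $\lambda_{t+1}(\mu,\xi)\le \mu(\RR)/T_t(\xi)^2\to 0$. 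If the ratio tended to $r_-<1$ we would have $\lvert Q_t(\xi)\rvert$ decaying geometrically and hence $\sum_t Q_t(\xi)^2<\infty$, a contradiction. Therefore $Q_t(\xi)/Q_{t-1}(\xi)\to r_+ = \xi+\sqrt{\xi^2-1}$, which is \eqref{eq:rakhmanov}. (As a by-product, $\lvert Q_t(\xi)\rvert\to\infty$.)

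\emph{Part (i): the coefficient limits --- the main obstacle.} This is the content of Rakhmanov's theorem and the reason for citing it; for completeness I would indicate two standard routes. The classical one (Rakhmanov; see also M\'at\'e--Nevai--Totik) combines the a.e.\ Christoffel-function asymptotics $t\,\lambda_t(\mu,x)\to \pi\sqrt{1-x^2}\,\mu'(x)$, valid because $\mu'>0$ a.e.\ forces regularity, with an entropy/variational inequality which, applied along an arbitrary subsequence of indices, controls $\sum_k(\alpha_{k+1}-\tfrac12)^2+\sum_k\beta_k^2$ and hence pins $\alpha_t\to\tfrac12$, $\beta_t\to 0$. A more recent route (Denisov) realizes the limit points of the pairs $(\alpha_t,\beta_t)$ as recurrence coefficients of \emph{reflectionless} two-sided Jacobi matrices with spectrum $[-1,1]$ --- obtained by passing a Schur-function/step-by-step sum-rule identity to right limits --- and a reflectionless Jacobi matrix with spectrum $[-1,1]$ must be the free one ($\alpha\equiv\tfrac12$, $\beta\equiv 0$), so the coefficient sequence has a unique limit point. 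Either way the hypothesis ``$\mu'>0$ a.e.\ on $]-1,1[$'' is essential, not cosmetic: dropping it, the coefficients may fail to converge and \eqref{eq:rakhmanov} may break. Finally, since the rest of the paper uses \eqref{eq:rakhmanov} only pointwise at a fixed $\xi>1$, Part (ii) as stated suffices; the locally uniform version on $\mathbb{C}\setminus[-1,1]$ follows from the same ingredients but is not needed.
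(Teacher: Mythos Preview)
The paper does not prove this theorem at all: it is stated as a cited result (Rakhmanov 1983, with pointers to Lubinsky and Denisov for the orthonormal normalization) and used as a black box in the proof of Theorem~\ref{thm:conv_hb}. So there is no ``paper's own proof'' to compare against; your proposal strictly exceeds what the paper does.

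That said, your decomposition is precisely the one the paper gestures at in its footnote to Theorem~4, where it remarks that the ratio asymptotics and the coefficient limits $\alpha_t\to\tfrac12$, $\beta_t\to 0$ are equivalent (Nevai 1979, Theorem~13). You have made the easy direction of that equivalence explicit: your Part~(ii) correctly derives the ratio limit from the coefficient limits via Poincar\'e--Perron (applicable since the characteristic roots $r_\pm=\xi\pm\sqrt{\xi^2-1}$ have distinct moduli for $\xi>1$), uses the standard fact that zeros of orthogonal polynomials lie in $(-1,1)$ to ensure the ratios are well defined, and rules out the decaying root $r_-$ with a clean Christoffel-function argument. Your Part~(i) honestly identifies the coefficient convergence as the deep input and names the two standard proof strategies (the original Rakhmanov/M\'at\'e--Nevai--Totik route and the Denisov reflectionless-limit route) without pretending to reproduce them. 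This is exactly the right posture for a result the paper itself merely imports; your write-up is correct and, if anything, more informative than the paper's treatment.
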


We can use this result to compute the limit of the ratio $Q_{t-1}(m_0)/Q_t(m_0)$, that appears in \eqref{eq:reformation_a_b}, as $m(0)> 1$ (and thus is not in the interval $[-1,\,1]$):
\begin{align}
    \lim_{t \to \infty} \mfrac{Q_{t-1}(m_0)}{Q_t(m_0)} &\stackrel{\eqref{eq:rakhmanov}}{=} \Big(\mfrac{L + \ell}{L - \ell} + \sqrt{\big(\mfrac{L + \ell}{L - \ell}\big)^2-1}\big)\Big)^{-1} \nonumber\\
    &= \mfrac{\sqrt{L}-\sqrt{\ell}}{\sqrt{L}+\sqrt{\ell}}\,. \label{eq:limit_ratio_q}
\end{align}

The other dependency of Eq. \eqref{eq:reformation_a_b} on the iteration $t$ is through the coefficients $\alpha_t, \beta_t$. To compute the limits of these we use the following known asymptotics:\footnote{It can be shown that the last two theorems are equivalent \citep[Theorem 13]{nevai1979orthogonal}. However, it will be more convenient for our purposes to present them as independent results.}

\begin{theorem}[\citet{mate1985asymptotics}; Limits of recurrence coefficients] Under the same assumptions as Theorem \ref{thm:rakhmanov1983asymptotics}, the limits of the coefficients $\alpha_t, \beta_t$ in the orthonormal three-terms recurrence (Eq. \ref{eq:recurrence_orthonormal_poly}) is
    \begin{equation}\label{eq:mate}
        \lim_{t\rightarrow\infty}\alpha_t = \mfrac{1}{2}~,\qquad \lim_{t\rightarrow\infty}\beta_t = 0 ~.
    \end{equation}
\end{theorem}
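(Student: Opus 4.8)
The statement to prove is that, for a weight function $w$ supported on $[-1,1]$ with $w>0$ on the open interval $]-1,1[$ and zero elsewhere, the three-term recurrence coefficients $\alpha_t,\beta_t$ of the associated orthonormal polynomials satisfy $\alpha_t\to\tfrac12$ and $\beta_t\to 0$. This is exactly the Máté--Nevai--Totik conclusion under the Rakhmanov-type hypothesis, and I would present the proof as a short deduction from the Ratio Asymptotics theorem (Theorem~\ref{thm:rakhmanov1983asymptotics}) already stated in the excerpt, which holds under the same hypotheses. The idea is that the recurrence coefficients can be read off from the limiting behavior of the ratio $Q_t(\xi)/Q_{t-1}(\xi)$ for $\xi$ outside $[-1,1]$: divide the orthonormal recurrence \eqref{eq:recurrence_orthonormal_poly} through by $Q_{t-1}(\xi)$ and isolate $\alpha_t$ and $\beta_t$ in terms of consecutive ratios, then pass to the limit.

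\textbf{Key steps.} First, define $\varphi(\xi) \defas \xi + \sqrt{\xi^2-1}$ for $\xi>1$ (the conformal map of the exterior of $[-1,1]$), so that Theorem~\ref{thm:rakhmanov1983asymptotics} reads $Q_t(\xi)/Q_{t-1}(\xi)\to\varphi(\xi)$. Fix two distinct evaluation points $\xi_1,\xi_2 > 1$. Second, rewrite \eqref{eq:recurrence_orthonormal_poly} at $\xi=\xi_k$ by dividing by $Q_{t-1}(\xi_k)$:
\begin{equation*}
\alpha_t\,\frac{Q_t(\xi_k)}{Q_{t-1}(\xi_k)} = \xi_k - \beta_t - \alpha_{t-1}\,\frac{Q_{t-2}(\xi_k)}{Q_{t-1}(\xi_k)}\,.
\end{equation*}
Third, observe that this is a linear system for the pair $(\alpha_t,\beta_t)$ once the ratios are known; but since $\alpha_{t-1}$ also appears, I would instead argue in two stages. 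Stage (a): show $\limsup_t\alpha_t<\infty$ and $\liminf_t\alpha_t>0$ --- boundedness of $\alpha_t$ follows because the support of $w$ is bounded (the recurrence coefficients of orthonormal polynomials are always bounded by a constant depending only on the diameter of the support, e.g. $\alpha_t\le \tfrac14(\sup\supp w - \inf\supp w)$ and $|\beta_t|\le \max|\supp w|$, standard facts from the theory of orthogonal polynomials), and $\liminf\alpha_t>0$ because $\alpha_t = 0$ is impossible for a non-degenerate measure. Stage (b): take any subsequence along which $(\alpha_t,\alpha_{t-1},\beta_t)$ converges, say to $(A,A',B)$; pass to the limit in the displayed identity at $\xi_1$ and $\xi_2$, using $Q_t/Q_{t-1}\to\varphi(\xi_k)$ and $Q_{t-2}/Q_{t-1}\to 1/\varphi(\xi_k)$, to get
\begin{equation*}
A\,\varphi(\xi_k) = \xi_k - B - A'/\varphi(\xi_k)\,,\qquad k=1,2\,.
\end{equation*}
Using the identity $\varphi(\xi)+1/\varphi(\xi) = 2\xi$, subtract the $k=2$ equation from the $k=1$ equation to eliminate $B$ and obtain $A(\varphi(\xi_1)-\varphi(\xi_2)) = (\xi_1-\xi_2) - A'(1/\varphi(\xi_1)-1/\varphi(\xi_2))$; combining with the arithmetic-mean relation forces $A = A' = \tfrac12$ and then $B=0$, for every convergent subsequence. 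Since $(\alpha_t,\beta_t)$ is bounded and every convergent subsequence has the same limit $(\tfrac12,0)$, the full sequences converge, which is \eqref{eq:mate}.

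\textbf{Main obstacle.} The delicate point is not the algebra but making the subsequence argument airtight: I must know a priori that $\alpha_{t-1}$ and $\alpha_t$ have the \emph{same} subsequential limit along any subsequence I extract. This is handled by first extracting a subsequence along which the triple $(\alpha_{t},\alpha_{t-1},\beta_t)$ converges (possible by boundedness), then noting the two equations at $\xi_1,\xi_2$ pin down $A,A',B$ uniquely to $(\tfrac12,\tfrac12,0)$ regardless of the subsequence --- so in particular $A=A'$ is forced rather than assumed. The only other thing to be careful about is that $Q_{t-2}(\xi)/Q_{t-1}(\xi)\to 1/\varphi(\xi)$ genuinely follows from Theorem~\ref{thm:rakhmanov1983asymptotics} (it is the reciprocal of the ratio shifted by one index, and $\varphi(\xi)\ne0$), and that $Q_{t-1}(\xi_k)\ne 0$ for large $t$ so the division is legitimate --- both immediate since $\xi_k>1$ lies outside $\supp w$, where orthonormal polynomials have no zeros. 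With these two observations the proof reduces to the elementary linear-algebra computation sketched above.
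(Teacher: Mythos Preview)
The paper does not prove this theorem at all: it is quoted from \citet{mate1985asymptotics}, and a footnote remarks that it is in fact equivalent to Rakhmanov's ratio asymptotics (Theorem~\ref{thm:rakhmanov1983asymptotics}), citing Nevai. Your proposal is precisely a realization of that equivalence --- you deduce the limits of $(\alpha_t,\beta_t)$ from the ratio asymptotics by dividing the recurrence \eqref{eq:recurrence_orthonormal_poly} through by $Q_{t-1}(\xi)$ and passing to subsequential limits. So your route is different from (and more informative than) what the paper actually does, which is simply to cite the result.

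There is, however, one genuine gap in your sketch. Two evaluation points $\xi_1,\xi_2$ are \emph{not} enough to pin down the subsequential limit $(A,A',B)$. Writing $z_k=\varphi(\xi_k)$ and using $\xi_k=\tfrac12(z_k+1/z_k)$, your limit identity becomes
\[
(A-\tfrac12)\,z_k + (A'-\tfrac12)\,\tfrac{1}{z_k} + B \;=\; 0,\qquad k=1,2\,.
\]
Multiplying by $z_k$ shows $z_1,z_2$ are roots of the quadratic $(A-\tfrac12)z^2+Bz+(A'-\tfrac12)$, which is perfectly consistent with a one-parameter family of triples $(A,A',B)\ne(\tfrac12,\tfrac12,0)$. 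The ``arithmetic-mean relation'' $\varphi+1/\varphi=2\xi$ is already what produced the right-hand side $\xi_k$, so invoking it again yields nothing new. The fix is easy: either take \emph{three} distinct points $\xi_1,\xi_2,\xi_3>1$ (three distinct roots force the quadratic to vanish identically, hence $A=A'=\tfrac12$, $B=0$), or note that the subsequence was chosen independently of $\xi$, so the limit identity $Az+A'/z+B=\tfrac12(z+1/z)$ holds for all $z>1$ and you can simply match coefficients. A minor side remark: your justification ``$\liminf\alpha_t>0$ because $\alpha_t=0$ is impossible'' only gives $\alpha_t>0$ for each $t$; fortunately you never actually use $\liminf\alpha_t>0$ --- boundedness alone suffices for the compactness argument, and $A=\tfrac12$ then rules out $\alpha_t\to 0$ along any subsequence a posteriori.
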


Using this last theorem together with \eqref{eq:limit_ratio_q}, we have
\begin{align*}
\lim_{t \to \infty} {\color{colormomentum}(1 - a_t)} &~~\stackrel{\eqref{eq:reformation_a_b}}{=} 
-\left(\lim_{t \to \infty} \mfrac{\alpha_{t-1}}{ \alpha_t}\right) \left( \lim_{t \to \infty}\mfrac{Q_{t-2}(m_0)}{ Q_t(m_0)}\right) \nonumber\\
&\!\stackrel{(\ref{eq:limit_ratio_q}, \ref{eq:mate})}{=} -\Big(\mfrac{\sqrt{L}-\sqrt{\ell}}{\sqrt{L}+\sqrt{\ell}}\Big)^2,
\end{align*}
which is the claimed limit.

To conclude the proof, we compute the same limit for the step-size ${\color{colorstepsize}b_t}$:
\begin{align}
    \lim_{t \to \infty} {\color{colorstepsize}b_t} &\stackrel{\eqref{eq:reformation_a_b}}{=}  - \mfrac{2}{ L - \ell} \left(\lim_{t \to \infty} \alpha_t^{-1}\right) \left(\lim_{t \to \infty} \mfrac{Q_{t-1}(m_0)}{Q_{t}(m_0)}\right) \\
    &\!\!\!\!\stackrel{(\ref{eq:limit_ratio_q}, \ref{eq:mate})}{=} - \left(\mfrac{2}{\sqrt{L} + \sqrt{\ell}}\right)^2\,\,,~\widetilde{r}_1 = r_1, \widetilde{r}_0 = r_0.
\end{align}

\section{Asymptotic Expected Convergence Rates} \label{sec:proof}

The previous section showed convergence of the method's parameters to PM, but said nothing about its rate of convergence. This section fills this gap by providing the asymptotic convergence of the expected convergence rate $\mathbb{E}\|\xx_t-\xx^\star\|^2$.
More precisely, we show that the expected convergence rate converges to the rate of convergence of Polyak, and that this convergence rate is \textit{independent} of the probability distribution.

\begin{restatable}{thm}{conv_hb_rate} \label{thm:conv_hb_rate}
    Under the same assumptions of Theorem \ref{thm:conv_hb}, the asymptotic expected rate of convergence of the optimal method converges to the worst-case rate of convergence,
    \begin{equation}\label{eq:asympt_rate}
         \limsup_{t\rightarrow \infty}\sqrt[t]{\mathbb{E}\left[\mfrac{\|\xx_t-\xx^\star\|^2}{\|\xx_0-\xx^\star\|^2}\right]} = \left(\mfrac{\sqrt{L}-\sqrt{\ell}}{\sqrt{L}+\sqrt{\ell}}\right)^2.
    \end{equation}
\end{restatable}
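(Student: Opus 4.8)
The plan is to combine the integral representation of the expected error from Theorem~\ref{thm:pedregosa_rate} with the reformulation of the optimal residual polynomial in terms of orthonormal polynomials established in Step~1 of the proof of Theorem~\ref{thm:conv_hb}. By Theorem~\ref{thm:pedregosa_rate}, $\mathbb{E}\|\xx_t-\xx^\star\|^2 = R^2 \int_\RR P_t^2\dif\mu$ with $R^2 = \mathbb{E}\|\xx_0-\xx^\star\|^2 / d$ (or directly the ratio inside the root is $\frac{1}{d}\int P_t^2\dif\mu$ appropriately normalized), so the quantity of interest is $\limsup_t \big(\int_{\ell}^L P_t(\lambda)^2\dif\mu(\lambda)\big)^{1/t}$. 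Using $P_t(\lambda) = Q_t(m(\lambda))/Q_t(m_0)$ from Lemma~\ref{lemma:orthonormal_a_b}, where $Q_t$ is orthonormal with respect to $\lambda\dif\mu$ on $[-1,1]$, we can write
\begin{equation*}
\int_\ell^L P_t(\lambda)^2 \dif\mu(\lambda) = \frac{1}{Q_t(m_0)^2}\int_\ell^L \frac{Q_t(m(\lambda))^2\,\lambda\dif\mu(\lambda)}{\lambda} \le \frac{1}{\ell\, Q_t(m_0)^2}\int_\ell^L Q_t(m(\lambda))^2\,\lambda\dif\mu(\lambda) = \frac{1}{\ell\, Q_t(m_0)^2},
\end{equation*}
where the last equality uses that the $Q_t$ are orthonormal for the weight $\lambda\dif\mu$. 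Therefore $\big(\int_\ell^L P_t^2\dif\mu\big)^{1/t} \le (\ell\, Q_t(m_0)^2)^{-1/t} = \ell^{-1/t}\,|Q_t(m_0)|^{-2/t}$, and the first factor tends to $1$.

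The remaining task is to evaluate $\lim_t |Q_t(m_0)|^{1/t}$. Here I would invoke the $n$-th root asymptotics for orthonormal polynomials evaluated outside the support: under the hypothesis that the weight is strictly positive on the interval (the Erd\H{o}s--Tur\'an / regular-measure class, which our assumption on $\dif\mu$ guarantees, noting $\lambda\dif\mu$ is also strictly positive on $[\ell,L]$ since $\ell>0$), one has $\lim_t |Q_t(\xi)|^{1/t} = \exp(g_{[-1,1]}(\xi))$ for $\xi\notin[-1,1]$, where $g_{[-1,1]}$ is the Green's function of the complement of $[-1,1]$ with pole at infinity, namely $g_{[-1,1]}(\xi) = \log|\xi + \sqrt{\xi^2-1}|$. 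This is precisely the $t\to\infty$ telescoped version of the ratio asymptotics already used in Theorem~\ref{thm:rakhmanov1983asymptotics}: since $Q_t(m_0)/Q_{t-1}(m_0) \to m_0 + \sqrt{m_0^2-1}$, the Ces\`aro/geometric mean gives $|Q_t(m_0)|^{1/t}\to m_0+\sqrt{m_0^2-1}$ directly, so I can actually avoid citing potential-theoretic machinery and just use the already-stated ratio asymptotics together with the elementary fact that $c_t \to c > 0$ implies $(c_1\cdots c_t)^{1/t}\to c$. Combining, $|Q_t(m_0)|^{-2/t} \to (m_0+\sqrt{m_0^2-1})^{-2}$, and by the computation \eqref{eq:limit_ratio_q} this equals $\big(\frac{\sqrt L-\sqrt\ell}{\sqrt L+\sqrt\ell}\big)^2$, giving the upper bound $\limsup_t \sqrt[t]{\mathbb{E}[\|\xx_t-\xx^\star\|^2/\|\xx_0-\xx^\star\|^2]} \le \big(\frac{\sqrt L-\sqrt\ell}{\sqrt L+\sqrt\ell}\big)^2$.

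For the matching lower bound I would argue that the optimal average-case method cannot beat the Polyak rate: since $\mathbb{E}\|\xx_t-\xx^\star\|^2$ is, by Theorems~\ref{thm:pedregosa_rate} and~\ref{thm:optimal_polynomial}, the minimum of $R^2\int P^2\dif\mu$ over residual polynomials $P$ of degree $t$, lower-bounding it amounts to showing no degree-$t$ residual polynomial can make $\int_\ell^L P^2\dif\mu$ decay faster than the Polyak rate. A clean way is to use the Green's-function lower bound: for any residual polynomial $P$ of degree $t$, $\sup_{\lambda\in[\ell,L]}|P(\lambda)| \ge$ some constant times $(m_0+\sqrt{m_0^2-1})^{-t}$ (this is the classical extremal property of Chebyshev polynomials — the residual polynomial of least sup-norm on $[\ell,L]$ is the shifted Chebyshev polynomial, with sup-norm exactly $1/|T_t(m_0)| \sim 2(m_0+\sqrt{m_0^2-1})^{-t}$), and since $\dif\mu$ has positive mass on every subinterval of $[\ell,L]$ one can convert a sup-norm lower bound on $[\ell,L]$ into an integral lower bound $\int_\ell^L P^2\dif\mu \ge c\, (m_0+\sqrt{m_0^2-1})^{-2t+o(t)}$ — e.g. via continuity, $|P|$ stays within a factor near its max on a subinterval of length shrinking only polynomially in $t$ (a Remez/Markov-brothers inequality argument), which carries fixed positive $\mu$-mass up to subexponential factors. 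Taking $t$-th roots then yields the reverse inequality, and the two bounds together give the claimed equality.

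The main obstacle is the lower bound, specifically converting the sup-norm extremal bound into an $L^2(\dif\mu)$ bound with no exponential loss: one must ensure that the region where $|P|$ is comparable to $\|P\|_{\infty,[\ell,L]}$ is not so thin that its $\mu$-measure decays exponentially. This is where the strict-positivity hypothesis on $\dif\mu$ is essential — it guarantees $\mu$ of any interval of length $\delta$ is at least of order $\delta$ (times the minimum density), and a Markov-brothers / Remez-type inequality bounds the derivative of $P$ on $[\ell,L]$ by $O(t^2)\|P\|_\infty$, so $|P|$ remains $\ge \frac12\|P\|_\infty$ on an interval of length $\Omega(t^{-2})$, contributing $\mu$-mass $\Omega(t^{-2})$, which is subexponential and hence washed out by the $t$-th root. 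I would present this carefully, since it is the one place the argument genuinely uses more than the ratio asymptotics already invoked in the previous section.
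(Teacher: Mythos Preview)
Your argument is correct, but it is \emph{not} the route the paper takes. The paper exploits a special identity valid only for the average-case optimal polynomial, namely $\mathbb{E}\|\xx_t-\xx^\star\|^2 = R^2\int P_t\,\dif\mu$ (first power, not square). Plugging the three-term recurrence of $P_t$ into this integral and using orthogonality of $P_{t-1}$ against $\lambda$ with respect to $\lambda\dif\mu$ yields the scalar recurrence $r_t = a_t r_{t-1} + (1-a_t) r_{t-2}$. Since $a_t\to a_\infty$ by Theorem~\ref{thm:conv_hb}, the Poincar\'e--Perron theorem gives $\limsup \sqrt[t]{r_t}\in\{1,\,a_\infty-1\}$, and optimality rules out the root $1$. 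So the paper never touches $Q_t(m_0)$, root asymptotics, or extremal polynomials here; it reduces everything to a two-term linear recurrence with convergent coefficients.

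Your approach via $\int P_t^2\dif\mu$ and the sandwich $Q_t(m_0)^{-2}$ is more elementary in that it reuses only the ratio asymptotics already invoked in Theorem~\ref{thm:conv_hb} and avoids both the special first-power identity and Poincar\'e--Perron. However, you are making the lower bound much harder than it is. The very same trick you used for the upper bound gives a matching lower bound: on $[\ell,L]$ one has $\tfrac{1}{\lambda}\ge \tfrac{1}{L}$, so
\[
\int_\ell^L P_t^2\,\dif\mu \;=\; \frac{1}{Q_t(m_0)^2}\int_\ell^L \frac{Q_t(m(\lambda))^2\,\lambda\,\dif\mu(\lambda)}{\lambda}\;\ge\; \frac{1}{L\,Q_t(m_0)^2},
\]
and hence $\tfrac{1}{L}\,Q_t(m_0)^{-2}\le \int P_t^2\dif\mu \le \tfrac{1}{\ell}\,Q_t(m_0)^{-2}$. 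Taking $t$-th roots of this two-sided bound gives the claimed limit directly, with no need for Chebyshev extremality, Markov--Remez inequalities, or any discussion of where $|P_t|$ is near its sup. Your entire final paragraph can be deleted.
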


\begin{proof}
Let $P_t$ be the residual orthogonal polynomial w.r.t. $\lambda \dif\mu(\lambda)$. \citet[Theorem 3.1]{pedregosa2020acceleration} showed that the expected rate of convergence for average-case optimal methods admits the following simple form
\begin{equation}
    \mathbb{E}\|\xx_t-\xx^\star\|^2 =  R^{2}\int_{\mathbb{R}} P_t  \dif\mu~.
\end{equation}
This form is particularly convenient for us, as we can then use the the three-term recurrence to obtain a recurrence of this expression. Let $r_t = \int_{\mathbb{R}} P_t \dif\mu$. After using the recurrence over $P_t$,
\begin{align}
    r_t &= \int_{\mathbb{R}} (a_t+\lambda b_t)P_{t-1}(\lambda) + (1-a_t) P_{t-2}(\lambda) \dif\mu(\lambda)\nonumber\\
    &= a_t \underbrace{\int_{\mathbb{R}} P_{t-1}\dif\mu}_{=r_{t-1}} + (1-a_t) \underbrace{\int_{\mathbb{R}}P_{t-2} \dif\mu}_{=r_{t-2}}\,,
\end{align}
where in the last identity we have used the orthogonality between $P_t$ and $P_0(\lambda)=1$ w.r.t. $\lambda \dif\mu(\lambda)$. In all, we have that the convergence rate $r_t$ is described by the recurrence
\begin{equation}
\begin{split}
    r_t &= a_t r_{t-1} + (1-a_t) r_{t-2}\,,\\
    r_1 &= 1 + b_1 \int_{\mathbb{R}} \lambda \dif\mu\,,~ r_0 = 1 \,.
\end{split}
\end{equation}

A classical result, often referred to as the Poincar{\'e}-Perron theorem (see for example \citet[Theorem C]{pituk2002more} or \citep[Thm. 8.11]{elaydi2005introduction}  ), states that if $a_t$ has a finite limit --guaranteed by the previous theorem and which we denote $a_{\infty}$-- then the recurrence has a fundamental set of solutions $\{r_t^1, r_t^2\}$ such that
\begin{equation}
    \limsup_{t \to \infty} \sqrt[t]{r^i_t} = |\lambda_i|\quad \text{ $i =1, 2$}\,,
\end{equation}
where $\lambda_i$ are the roots of the characteristic equation ${\lambda^2 - a_{\infty} \lambda - (1 - a_{\infty})}$. In our case, these roots are $1$ and $1 - a_{\infty}$. Now, since the method we're considering is average-case optimal, this limit cannot be larger than that of Polyak momentum, known to be $(\tfrac{\sqrt{L}-\sqrt{\ell}}{\sqrt{L}+\sqrt{\ell}})^2 < 1$. Hence, we can eliminate the solution $r^1_t = 1$ and conclude
\begin{equation}
    \limsup_{t \to \infty} \sqrt[t]{r_t} = (1 - a_{\infty}) = \left(\mfrac{\sqrt{L}-\sqrt{\ell}}{\sqrt{L}+\sqrt{\ell}}\right)^2\,.
\end{equation}

\end{proof}

\section{Discussion and Simulations: Speed of Convergence to PM}

The main result (Theorem~\ref{thm:conv_hb}) shows that, asymptotically, any average-case optimal method converge towards Polyak momentum. This could be interpreted as evidence against average-case optimal methods, as average-case optimal methods are not ``essentially different'' from PM. However, simulations show other dynamics at play.

\begin{figure*}
    \centering
    \includegraphics[width=\linewidth]{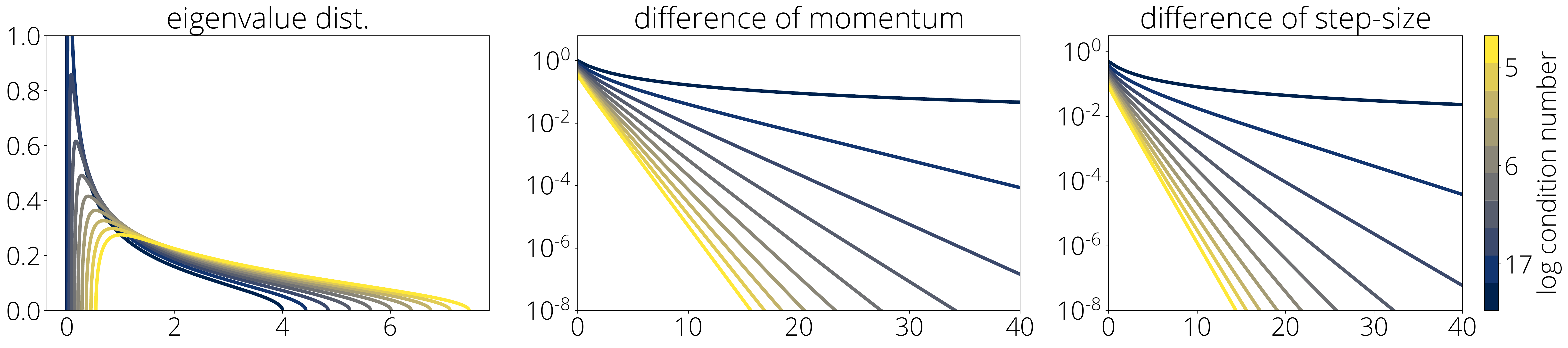}
    \includegraphics[width=\linewidth]{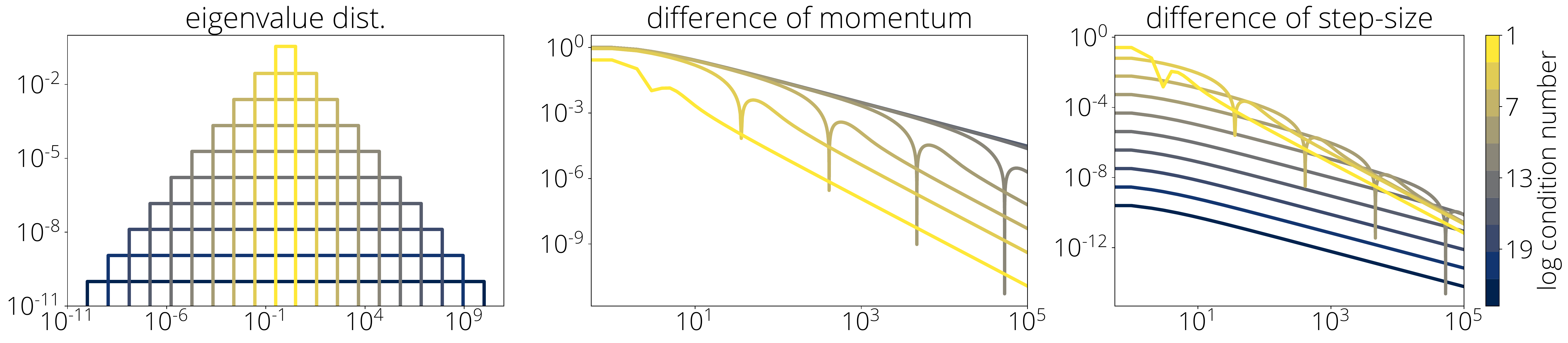}
    \caption{{\bfseries Speed of convergence to Polyak Momentum}. For different parametrizations of the Marchenko-Pastur (\textit{top line}) and uniform (\textit{bottom line}) distributions, we plot the absolute difference between the average-case optimal momentum parameter (\textit{middle}) and average-case optimal step-size (\textit{right}) and the momentum and step-size of the Polyak method.
    The plots show a high anti-correlation between the speed of convergence of optimal average-case methods to PM and problem conditioning: for well-conditioned problems (small condition number) the parameters converge faster to PM than for ill-conditioned (large condition number) problems.
    Thus, in a regime were we perform only a few iterations, Polyak momentum may not be the best choice.}
    \label{fig:speed_convergence}
\end{figure*}

In Figure \ref{fig:speed_convergence} we plot the speed of convergence of the parameters of the optimal average-method for the Marchenko-Pastur distribution with different ratios $r = \frac{d}{n}$ (and hence condition number) and for the uniform distribution with different intervals. We see a clear effect of the condition number on the speed of convergence. The more ill-conditioned the problem, the slower the convergence of the optimal method to PM, implying that PM behaves sub-optimally for a larger number of iterations. This observation is consistent with the results of \citep{pedregosa2020acceleration}, who showed important speedups in the ill-conditioned regime.

\section{Conclusion and Perspectives}

In this work, we've shown that optimal average-case methods for minimizing quadratics converge to PM under mild assumptions on the expected spectral distribution. This universality over the probability measure is somewhat surprising, as Polyak momentum method only depends on the edges of the spectrum, while on the other hand optimal average-case methods depend on the whole spectrum.

A potential area for future work is the analysis of the rate of convergence of optimal method to Polyak momentum algorithm. It seems the convergence of the step-size and momentum parameters are bounded polynomially in the number of iterations. This observation indicates the potential benefit of optimal methods over PM in the case where we perform a small number of iteration, typical in machine-learning problems.


A second research direction is the study of optimal polynomials on the \textit{complex} plane. In this case, we are no longer solving the optimization problem \eqref{eq:quad_optim}. Instead, we aim to solve the linear system $\AA \xx=\bb$, where the matrix $\AA$ is non-symmetric, with potentially complex eigenvalues. This has implication in the study of optimal algorithm in game theory \citep{azizian2020accelerating} or in the acceleration of primal-dual algorithms \citep{bollapragada2018nonlinear}.

Finally, our results are only valid in the strongly convex regime ($\ell > 0$), ruling out the important case $r=1$ in the Marchenko-Pastur distribution, which corresponds to large least squares problems with a square matrix. After the first version of this paper appeared, \citet{paquette2020} derived an average-case analysis for gradient descent and showed a gap between the asymptotic average-case and worst-case convergence rate. The development of average-case optimal methods and the study of their asymptotic limits in this regime remains an open problem.


\clearpage
\section*{Acknowledgements}

We would like to thank our colleague Gauthier Gidel for identifying and reporting some gaps in the proof of Theorem \ref{thm:conv_hb_rate}. A note of gratitude also goes to Reza Babanezad, Simon Lacoste-Julien, Remi Lepriol, Nicolas  Loizou, Adam Ibrahim, Nicolas Leroux and Courtney Paquette for their insightful discussions and relevant remarks. We also thank Francis Bach and Raphaël Berthier for their useful remarks and pointers.

\bibliography{biblio}
\bibliographystyle{icml2020}

\end{document}